\documentclass{techrep}

\usepackage[mathscr]{euscript}
\usepackage{upgreek}    
\usepackage{parskip}    
\setlength{\parindent}{1.5em}

\SHORTTITLE{Bayesian OED}
\TITLE{A brief note on the Bayesian D-optimality criterion}

\AUTHORS{Alen~Alexanderian\footnote{North Carolina State University, Raleigh, NC, USA.
         \THANKS{alexanderian@ncsu.edu}{\today}
         This work was supported in part by US National Science Foundation grant \#2111044.}}
\ABSTRACT{We consider finite-dimensional Bayesian linear inverse problems with 
Gaussian priors and additive Gaussian noise models.  The goal of this note is to
present a simple derivation of the well-known fact that solving the Bayesian
D-optimal experimental design problem, i.e., maximizing the expected information
gain, is equivalent to minimizing the log-determinant of posterior covariance
operator. We focus on the finite-dimensional inverse problems.
However, the presentation is kept generic to facilitate 
extensions to infinite-dimensional inverse problems.}

\renewcommand{\vec}[1]{{\mathchoice
                     {\mbox{\boldmath$\displaystyle{#1}$}}
                     {\mbox{\boldmath$\textstyle{#1}$}}
                     {\mbox{\boldmath$\scriptstyle{#1}$}}
                     {\mbox{\boldmath$\scriptscriptstyle{#1}$}}}}

\newcommand{\ip}[2]{\left\langle {#1}, {#2} \right\rangle}
\newcommand{\mat}[1]{\mathsf{\mathbf{#1}}}
\newcommand{\Id}{\mat{I}}
\renewcommand{\S}{\tilde{\mat{S}}}
\newcommand{\GM}[2]{\mathcal{N}\left( {#1}, {#2}\right)}

\newcommand{\obs}{\vec{y}} 
\newcommand{\prcov}{\vec{\Gamma}_{\mathrm{pr}}}
\newcommand{\postcov}{\vec{\Gamma}_{\mathrm{post}}}
\newcommand{\noise}{\vec{\Gamma}_{\!\mathrm{noise}}}

\newcommand{\ave}[2]{\mathsf{E}_{{#1}}\!\left\{ {#2} \right\}}

\newcommand{\trace}{\mathsf{tr}}

\newcommand{\R}{\mathbb{R}}

\newcommand{\F}{\mat{F}}

\newcommand{\Q}{\tilde{\mat{Q}}}
\newcommand{\HM}{\mat{H}}
\newcommand{\HMt}{\tilde{\mat{H}}}

\newcommand{\hilb}{\mathscr{V}}

\newcommand{\dparpr}{\dpar_{\mathrm{pr}}}
\newcommand{\dparpost}{\dpar^\obs_{\mathrm{post}}}
\newcommand{\priorm}{\upmu_\mathrm{pr}}
\newcommand{\postm}{\upmu_{\mathrm{post}}}
\renewcommand{\Xi}{X^{-1}}
\newcommand{\dpar}{\vec{m}}

\newcommand{\avey}[1]{\mathsf{E}_{{\vec{y}|\dpar}}\left\{ {#1} \right\}}
\newcommand{\DKL}{\mathrm{D}_{\mathrm{kl}}}
\newcommand{\tran}{{\mkern-1.5mu\mathsf{T}}}                
\newcommand{\noiseinv}{\noise^{-1}}

\begin{document}
\maketitle

\section{Introduction}
Let $\hilb$ denote the $n$-dimensional Euclidean space.
Suppose $\dpar \in \hilb$ is a parameter vector, which we seek to infer 
from the following linear model 
\begin{equation}\label{equ:linmodel}
    \obs = \F \dpar + \vec{\eta}.
\end{equation}
Here, $\vec{y} \in \R^q$ is observed data,  
$\F: \hilb \to \R^q$ is a linear parameter-to-observable map, and
$\vec{\eta}$ is a random $q$-vector that models measurement noise.
We assume $\vec{\eta}$ a centered Gaussian with covariance matrix $\noise$ and 
that $\vec{\eta}$ and $\dpar$ are independent. 

In a Bayesian formulation, we use a realization of $\obs$ to obtain 
a posterior distribution law for the inversion parameter $\dpar$.
Herein, we consider a Gaussian prior, $\priorm = \GM{\dparpr}{\prcov}$. 
The solution of the present Gaussian linear
inverse problem is the posterior measure, $\postm = \GM{\dparpost}{\postcov}$, with
\begin{equation}\label{equ:postmeas}
    \dparpost(\obs) = \postcov(\F^\tran\noise^{-1}\obs + \prcov^{-1}\dparpr)
\quad
\text{and}
\quad
    \postcov = (\HM + \prcov^{-1})^{-1},
\end{equation}
where $\HM = \F^\tran \noise^{-1} \F$.  Note that, in general, the posterior
measure describes a distribution law for the inversion parameter $\dpar$ that is
conditioned on the observed data and is absolutely continuous with respect to
the prior measure.

Optimal experimental
design~\cite{Pazman86,AtkinsonDonev92,ChalonerVerdinelli95,Ucinski05} concerns
the question of how to collect experimental data $\vec{y}$ so that the parameter
estimation is ``optimal'' in some sense. In this context, an \emph{experimental 
design} (or design for short) specifies the manner in which experimental data is
collected.  For example, the design might be the placement of measurement points
where data is recorded.  The definition of what constitutes an optimal design
leads to the choice of the design criterion.  This note is concerned with
Bayesian D-optimality.  It is well-known that in the Gaussian linear setting,
Bayesian D-optimality (i.e., maximizing the expected information gain) is
equivalent to minimizing the log-determinant of the posterior covariance operator.  
Below, we will present a simple proof of this fact by showing 
\begin{equation}\label{equ:avgKL} \ave{\priorm}{\avey{\DKL(\postm\| \priorm)}} =
-\frac12 \log \det(\postcov) + \mathrm{const}, \end{equation} where $\DKL$
denotes the Kullback--Leibler (KL) divergence~\cite{kullback:1951} from the
posterior measure to the prior measure.

We state the results in an abstract form where the design is not explicitly
incorporated in the formulations. In general, the
design would enter the Bayesian inverse problem through the data likelihood. The exact
nature of such formulations is problem-dependent and can be found in numerous
papers and books on optimal design of experiments; see
e.g.,~\cite{Ucinski05,HaberHoreshTenorio08,AlexanderianPetraStadlerEtAl14,Alexanderian21}.  Moreover, even
though the parameter space is taken as the $n$-dimensional Euclidean space
$\hilb = \R^n$ equipped with the Euclidean inner product, the following 
presentation is kept sufficiently generic to also enable motivating extensions
to an infinite-dimensional  Hilbert space setting.  This is discussed briefly
towards the end of this note. However, all the derivations are done in finite
dimensions to keep the discussion simple. 

\section{Preliminaries}\label{sec:preliminaries}
In our discussion of the concept of information gain in a Bayesian inverse problem,
we will be using the KL divergence between Gaussian measures on 
the $n$-dimensional Euclidean space. We recall that for Gaussian measures 
$\mu_1 = \GM{{\vec{a}_1}}{\mat{\Sigma}_1}$
and $\mu_2 = \GM{{\vec{a}_2}}{\mat{\Sigma}_2}$, the KL divergence from 
$\mu_1$ to $\mu_2$ is given by 
\begin{equation}\label{equ:gauss-kl}
 \DKL(\mu_1\| \mu_2) = \frac{1}{2} 
                      \Big[ -\log\left( \frac{\det\mat{\Sigma}_1}{\det \mat{\Sigma}_2}\right) - n + 
                              \trace(\mat{\Sigma}_2^{-1} \mat{\Sigma}_1) + 
                              (\vec{a}_1 - \vec{a}_2)^\tran \mat{\Sigma}_2^{-1} (\vec{a}_1 - \vec{a}_2)
                      \Big].
\end{equation}
We also recall the definition of the mean and covariance operator of a Borel
probability measure on $\hilb$.  Let $\{\vec{e}_1, \ldots, \vec{e}_n\}$ be the
standard basis in $\hilb$.  The mean $\vec{a}$ and covariance operator
$\mat{C}$ of a Borel probability measure $\mu$ on $\hilb$ are defined through:
\[
a_i = \ip{\vec{e}_i}{\vec{a}} = \int_{\hilb} \ip{\vec{e}_i}{\vec{x}} \, \mu(d\vec{x}) \quad \mbox{ and }\quad
C_{ij} = \int_{\hilb} \ip{\vec{e}_i}{\vec{x}-\vec{a}} \ip{\vec{e}_j}{\vec{x}-\vec{a}} \, \mu(d\vec{x}),
\] 
where $\ip{\cdot}{\cdot}$ denotes the Euclidean 
inner-product and $i, j \in \{1, \ldots, n\}$.\footnote{The definitions of the mean and covariance operator
extend naturally to the case of Borel probability measures on infinite-dimensional
Hilbert spaces~\cite{DaPrato}.} 

In what follows, we need the following well-known result regarding
the expectation of a quadratic form, which we also prove to keep the
presentation self-contained.
\begin{proposition}\label{prp:quadform}
Consider a Borel probability measure $\mu$ on $\hilb$,
with mean $\vec{a}$ and (symmetric positive definite)
covariance matrix $\mat{C}$. 
Then, for a linear operator $\mat{Q}$ on $\hilb$, 
\begin{equation}\label{equ:fdcase}
   \int_{\hilb} \vec{x}^\tran \mat{Q} \vec{x} \, \mu(d\vec{x}) =
   \trace(\mat{Q}\mat{C})
   + \vec{a}^\tran \mat{Q} \vec{a}.
\end{equation}
\end{proposition}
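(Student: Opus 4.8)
The plan is to prove the identity $\int_{\hilb} \vec{x}^\tran \mat{Q} \vec{x} \, \mu(d\vec{x}) = \trace(\mat{Q}\mat{C}) + \vec{a}^\tran \mat{Q} \vec{a}$ by expanding the quadratic form coordinate-wise and matching each resulting integral against the given integral definitions of the mean $\vec{a}$ and covariance $\mat{C}$.

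First I would write the scalar quadratic form in terms of the standard basis and the components of $\mat{Q}$: since $\vec{x}^\tran \mat{Q}\vec{x} = \sum_{i,j} Q_{ij}\,\ip{\vec{e}_i}{\vec{x}}\ip{\vec{e}_j}{\vec{x}}$, I can move the (finite) sum outside the integral by linearity, reducing the problem to evaluating $\int_{\hilb} \ip{\vec{e}_i}{\vec{x}}\ip{\vec{e}_j}{\vec{x}}\,\mu(d\vec{x})$ for each pair $(i,j)$.

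Next I would relate this integral to the covariance definition. The given formula defines $C_{ij}$ via the centered products $\ip{\vec{e}_i}{\vec{x}-\vec{a}}\ip{\vec{e}_j}{\vec{x}-\vec{a}}$, so the key step is to expand this centered product, integrate term by term, and use $\int \ip{\vec{e}_i}{\vec{x}}\,\mu(d\vec{x}) = a_i$ together with the fact that $\mu$ is a probability measure (so $\int \mu(d\vec{x}) = 1$). This yields the algebraic relation $\int \ip{\vec{e}_i}{\vec{x}}\ip{\vec{e}_j}{\vec{x}}\,\mu(d\vec{x}) = C_{ij} + a_i a_j$. Substituting back and recognizing $\sum_{i,j} Q_{ij} C_{ij} = \trace(\mat{Q}\mat{C})$ and $\sum_{i,j} Q_{ij} a_i a_j = \vec{a}^\tran \mat{Q}\vec{a}$ completes the proof.

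The computation is essentially routine in finite dimensions, so there is no serious obstacle; the only point demanding care is the bookkeeping in identifying the double sum $\sum_{i,j} Q_{ij} C_{ij}$ with $\trace(\mat{Q}\mat{C})$ (as opposed to $\trace(\mat{Q}^\tran\mat{C})$ or similar), which I would verify by writing out $(\mat{Q}\mat{C})_{ii} = \sum_j Q_{ij}C_{ji}$ and invoking the symmetry of $\mat{C}$. I would also note that finite-dimensionality guarantees all integrals are finite and the interchange of summation and integration is justified without further hypotheses, which is precisely why the author restricts the derivation to $\hilb = \R^n$.
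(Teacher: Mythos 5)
Your proof is correct and is essentially the paper's own argument with the two steps run in reverse order: the paper first centers at the vector level, writing $\int_{\hilb} \vec{x}^\tran \mat{Q}\vec{x}\,\mu(d\vec{x}) = \int_{\hilb} (\vec{x}-\vec{a})^\tran \mat{Q}(\vec{x}-\vec{a})\,\mu(d\vec{x}) + \vec{a}^\tran\mat{Q}\vec{a}$, and only then expands the centered integral in coordinates to obtain $\trace(\mat{Q}\mat{C})$, whereas you expand in coordinates first and then center each scalar integral via $\int_{\hilb} \ip{\vec{e}_i}{\vec{x}}\ip{\vec{e}_j}{\vec{x}}\,\mu(d\vec{x}) = C_{ij} + a_i a_j$; both rest on exactly the same ingredients (linearity and the definitions of $\vec{a}$ and $\mat{C}$), and your explicit verification, via symmetry of $\mat{C}$, that $\sum_{i,j}Q_{ij}C_{ij} = \trace(\mat{Q}\mat{C})$ is a detail the paper leaves implicit. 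One caveat on your closing remark: what makes the integrals finite is the hypothesized existence of the mean and covariance (finite second moments of each coordinate, plus Cauchy--Schwarz for the cross terms), not finite-dimensionality by itself --- a Borel probability measure on $\R^n$ need not possess a covariance --- though this does not affect your proof, since the proposition assumes $\mat{C}$ exists.
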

\begin{proof}
First note that 
\[
\begin{aligned}
       \int_{\hilb} \vec{x}^\tran \mat{Q} \vec{x} \, \mu(d\vec{x}) &=  \int_{\hilb} (\vec{x} - \vec{a})^\tran \mat{Q} (\vec{x} - \vec{a})\, \mu(d\vec{x})  + 
                            \int_{\hilb} \big(\vec{a}^\tran \mat{Q} \vec{x} + \vec{x}^\tran \mat{Q} \vec{a}\big)\, \mu(d\vec{x})  - \vec{a}^\tran \mat{Q} \vec{a} \\
       &= \int_{\hilb} (\vec{x} - \vec{a})^\tran \mat{Q} (\vec{x} - \vec{a})\, \mu(d\vec{x}) + \vec{a}^\tran \mat{Q} \vec{a}.
\end{aligned}
\]
The following completes the proof: 
\[
\begin{aligned}
\int_{\hilb} (\vec{x} - \vec{a})^\tran \mat{Q} (\vec{x} - \vec{a}) \, \mu(d\vec{x})
&= 
\int_{\hilb} \sum_{i,j=1}^n Q_{ij}  
  \ip{\vec{e}_i}{\vec{x}-\vec{a}} \ip{\vec{e}_j}{\vec{x}-\vec{a}} \, \mu(d\vec{x}) \\ 
&= \sum_{i,j=1}^n Q_{ij} \int_{\hilb} 
  \ip{\vec{e}_i}{\vec{x}-\vec{a}} \ip{\vec{e}_j}{\vec{x}-\vec{a}} \, \mu(d\vec{x}) 
= \sum_{i,j=1}^n Q_{ij} C_{ij} = \trace(\mat{Q}\mat{C}). \qedhere
\end{aligned}
\]
\end{proof}

\section{Bayesian D-optimality}\label{sec:Dopt}
We define the expected information gain as follows:
\newcommand{\aveDKL}{\overline\DKL}
\[
   \aveDKL(\postm \| \priorm) = \ave{\priorm}{\avey{\DKL(\postm\| \priorm)}}.  
\]
In an OED problem, $\aveDKL$ will be a function of a set of 
experimental design parameters. A
Bayesian D-optimal design problem aims to \emph{maximize} this function over
a set of admissible design parameters.  The goal of this section is to
show that, 
\begin{equation}\label{equ:DKLresult}
   \aveDKL(\postm\| \priorm) = -\frac12 \log \det(\postcov) + \frac12 \log \det(\prcov).
\end{equation}
As mentioned before, the experimental design parameters enter the formulation of
the Bayesian inverse problem through that data-likelihood. The prior covariance
operator does not depend on the design parameters.  Thus, from an OED point of
view, \eqref{equ:DKLresult} implies that maximizing the expected information
gain is equivalent to minimizing $\log \det( \postcov)$.

We begin by setting up some notations and basic definitions. 
First, we note the following bit of algebra regarding the posterior covariance:
\begin{equation}\label{equ:postcov_formulas}
    \begin{aligned}
    \postcov &= (\HM + \prcov^{-1})^{-1} \\
             &= \prcov^{1/2} (\HMt + \Id)^{-1} \prcov^{1/2}\\
             &= \prcov^{1/2} \S \prcov^{1/2},
    \end{aligned}
\end{equation}
where $\HMt = \prcov^{1/2}\HM\prcov^{1/2}$ and $\S =  (\HMt + \Id)^{-1}$.
In what follows, we will also need
a convenient expression for $\dparpost - \dparpr$. We note,
\begin{equation}
\begin{aligned}    
\dparpost - \dparpr &= 
\postcov\F^\tran\noise^{-1}\obs + \postcov\prcov^{-1}\dparpr -\dparpr\\
&=
\postcov\F^\tran\noise^{-1}\obs + \postcov( \prcov^{-1} - \postcov^{-1})\dparpr
\\
&=
\postcov\F^\tran\noise^{-1}\obs - \postcov \HM \dparpr.
\end{aligned}    
\end{equation}

Next, we state and prove 
the following lemma, which is the key to proving~\eqref{equ:DKLresult}.
\begin{lemma}\label{lem:quadform}
Let $\priorm = \GM{\dparpr}{\prcov}$ and $\postm = \GM{\dparpost}{\postcov}$ 
be the prior and posterior measures corresponding to a Bayesian
linear inverse problem with additive Gaussian noise model as in~\eqref{equ:linmodel}. Then,
\[
\ave{\priorm}{\avey{ (\dparpost - \dparpr)^\tran \prcov^{-1} (\dparpost - \dparpr)}} = \trace(\S \HMt).
\] 
\end{lemma}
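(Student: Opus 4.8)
The plan is to exploit that the posterior mean $\dparpost = \postcov\F^\tran\noiseinv\obs$ is \emph{linear} in the data $\obs$, so that $\dparpost^\tran\prcov^{-1}\dparpost = \obs^\tran\mat{Q}\obs$ is a quadratic form in $\obs$, with $\mat{Q}=\noiseinv\F\postcov\prcov^{-1}\postcov\F^\tran\noiseinv$. Since this integrand depends on $(\dpar,\obs)$ only through $\obs$, the nested expectation $\ave{\priorm}{\avey{\cdot}}$ collapses, by Fubini, to the expectation of $\obs^\tran\mat{Q}\obs$ under the \emph{marginal} law of $\obs$. First I would identify that law: writing $\obs=\F\dpar+\vec{\eta}$ with $\dpar\sim\GM{\vec{0}}{\prcov}$ and $\vec{\eta}\sim\GM{\vec{0}}{\noise}$ independent, the marginal is Gaussian with zero mean and covariance $\F\prcov\F^\tran+\noise$. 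Applying Proposition~\ref{prp:quadform} to this measure --- whose vanishing mean kills the quadratic-in-$\vec{a}$ term --- then gives
\[
   \ave{\priorm}{\avey{\dparpost^\tran\prcov^{-1}\dparpost}} = \trace\!\big(\mat{Q}\,(\F\prcov\F^\tran+\noise)\big).
\]

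What remains is trace algebra. Using the cyclic property to bring the outer $\noiseinv\F(\cdots)\F^\tran\noiseinv$ factors together with $\F\prcov\F^\tran+\noise$ produces the combination $\F^\tran\noiseinv(\F\prcov\F^\tran+\noise)\noiseinv\F = \HM\prcov\HM+\HM$, where $\HM=\F^\tran\noiseinv\F$. Substituting $\postcov=\prcov^{1/2}\S\prcov^{1/2}$ gives $\postcov\prcov^{-1}\postcov=\prcov^{1/2}\S^2\prcov^{1/2}$, and then repeatedly using the conjugation identity $\prcov^{1/2}\HM\prcov^{1/2}=\HMt$ together with cyclicity, I expect all the $\prcov$ and $\F$ factors to cancel, leaving $\trace\big(\S^2\HMt^2\big)+\trace\big(\S^2\HMt\big)=\trace\big(\S^2\HMt(\HMt+\Id)\big)$.

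The final collapse is the step to get right, though it is short. Recalling from~\eqref{equ:Sdef} that $\S=(\HMt+\Id)^{-1}$, I would rewrite $\S^2\HMt(\HMt+\Id)=\S^2\HMt\S^{-1}$ and apply cyclicity once more to obtain $\trace\big(\S^2\HMt\S^{-1}\big)=\trace\big(\S^{-1}\S^2\HMt\big)=\trace(\S\HMt)$, which is the claimed identity. I expect the main obstacle to be purely bookkeeping: correctly tracking the symmetric square-root factors $\prcov^{\pm1/2}$ when passing between $\HM$ and $\HMt$. Keeping the conjugation $\HMt=\prcov^{1/2}\HM\prcov^{1/2}$ explicit at each invocation of the cyclic property should guard against sign- and placement-errors in these factors.
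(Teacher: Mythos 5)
Your proof is correct, but it takes a genuinely different route from the paper's. The paper keeps the two expectations nested: it first applies Proposition~\ref{prp:quadform} to the conditional law $\obs|\dpar \sim \GM{\F\dpar}{\noise}$, producing a trace term plus a quadratic form in $\F\dpar$, and then applies the proposition a second time to average that quadratic term over the prior, arriving at $\trace(\S^2\HMt) + \trace(\S^2\HMt^2)$ before the same final collapse $\trace\big(\S^2\HMt(\Id+\HMt)\big) = \trace(\S\HMt)$. You instead collapse the double expectation at the outset: since the integrand depends on $(\dpar,\obs)$ only through $\obs$, the tower property reduces everything to a single expectation under the marginal (prior-predictive) law $\obs \sim \GM{\vec{0}}{\F\prcov\F^\tran + \noise}$, and one application of Proposition~\ref{prp:quadform}, with the mean term vanishing, gives $\trace\big(\mat{Q}\,(\F\prcov\F^\tran+\noise)\big)$ directly. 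Your algebra checks out: $\F^\tran\noise^{-1}(\F\prcov\F^\tran+\noise)\noise^{-1}\F = \HM\prcov\HM+\HM$, and conjugating by $\prcov^{1/2}$ turns $\trace\big(\prcov^{1/2}\S^2\prcov^{1/2}(\HM\prcov\HM+\HM)\big)$ into $\trace(\S^2\HMt^2)+\trace(\S^2\HMt)$, so the two arguments meet at the same final expression. What your route buys is economy and interpretability: a single invocation of the quadratic-form lemma, no cross term to track (the zero marginal mean kills it), and it exposes the statistical meaning of the double average as an expectation over the evidence distribution of the data. What it costs is the extra, implicitly used, ingredient that $\dpar$ and $\vec{\eta}$ are independent, so that the marginal covariance is $\F\prcov\F^\tran+\noise$; the paper's nested computation needs only the conditional law $\obs|\dpar$ exactly as given in the model, which is also the form that carries over most directly to the infinite-dimensional extensions the paper has in mind.
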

\begin{proof}
Note that
\begin{equation}\label{equ:ugly}
\begin{aligned}
(&\dparpost - \dparpr)^\tran \prcov^{-1} (\dparpost - \dparpr)    
\\
&= 
(\postcov\F^\tran\noise^{-1}\obs - \postcov \HM \dparpr)^\tran 
\prcov^{-1} (\postcov\F^\tran\noise^{-1}\obs - \postcov \HM \dparpr)\\
&= (\postcov\F^\tran\noise^{-1}\obs)^\tran \prcov^{-1}(\postcov\F^\tran\noise^{-1}\obs)
-2 (\postcov\F^\tran\noise^{-1}\obs)^\tran \prcov^{-1} \postcov \HM \dparpr\\ 
&\quad+ (\postcov \HM \dparpr)^\tran \prcov^{-1} (\postcov \HM \dparpr).
\end{aligned}
\end{equation} 
We consider each of the terms in the right-hand side separately. 
The second and third terms are simpler to deal with. 
The third one is simply 
\begin{equation}\label{equ:third_term_prelim}
\dparpr^\tran \HM \postcov \prcov^{-1} \postcov \HM \dparpr   
= \dparpr^\tran \HM \prcov^{1/2} \S^2 \prcov^{1/2} \HM \dparpr,
\end{equation}
where we have used $\postcov = \prcov^{1/2}\S\prcov^{1/2}$ 
(cf.~\eqref{equ:postcov_formulas})
to 
conclude $\postcov \prcov^{-1} \postcov = \prcov^{1/2} \S^2 \prcov^{1/2}$. 
The operator in the quadratic form in the right-hand side of~\eqref{equ:third_term_prelim}
appears several times below. Therefore, for notational convenience, we denote
\begin{equation}\label{equ:Qdef}
\Q = \HM \prcov^{1/2} \S^2 \prcov^{1/2} \HM.
\end{equation}
Using this notation, 
the third 
term in the right-hand side of~\eqref{equ:ugly} is simply
\begin{equation}\label{equ:third_term}
\dparpr^\tran \HM \postcov \prcov^{-1} \postcov \HM \dparpr   
= \dparpr^\tran \Q \dparpr.
\end{equation}

Next, we consider the second term in the right-hand side of~\eqref{equ:ugly}.  Note that our additive
Gaussian noise model implies $\obs | \dpar \sim \GM{\F \dpar}{\noise}$.
Therefore,
\[
    \begin{aligned}
    \avey{(\postcov\F^\tran\noise^{-1}\obs)^\tran \prcov^{-1} \postcov \HM \dparpr}    
    &= 
    \avey{ \dparpr^\tran \HM \postcov \prcov^{-1} \postcov \F^\tran \noise^{-1} \obs}
    \\
    &= 
    \avey{ \dparpr^\tran \HM  \prcov^{1/2} \S^2 \prcov^{1/2} \F^\tran \noise^{-1} \obs}
    \\
    &= \dparpr^\tran \Q \dpar.
    \end{aligned}
\]
Subsequently, we note
\begin{equation}\label{equ:second_term}
    \ave{\priorm}{\avey{ (\postcov\F^\tran\noise^{-1}\obs)^\tran \prcov^{-1} \postcov \HM \dparpr}} 
    = \dparpr^\tran \Q \dparpr. 
\end{equation}

We continue by considering the first term in the 
right-hand side of~\eqref{equ:ugly}. Taking expectation with respect to data 
yields,
\begin{align}
     &\avey{ (\postcov\F^\tran\noiseinv \obs)^\tran \prcov^{-1} (\postcov\F^\tran\noiseinv\obs)}
     \notag\\
     &\quad= \avey{ \obs^\tran \noiseinv \F \postcov \prcov^{-1} \postcov\F^\tran\noiseinv\obs}
     \notag\\
     &\quad= \avey{ \obs^\tran \noiseinv \F \prcov^{1/2} \S^2 \prcov^{1/2}\F^\tran\noiseinv\obs}
     \notag\\
&\quad= \trace(\noiseinv \F \prcov^{1/2} \S^2 \prcov^{1/2}\F^\tran)
+ (\F \dpar)^\tran \noiseinv \F \prcov^{1/2} \S^2 \prcov^{1/2}\F^\tran\noiseinv (\F \dpar)
\notag\\
&\quad= \trace(\S^2 \HMt)
+ \dpar^\tran \Q \dpar. 
\label{equ:innerexp}
   \end{align}
Note that we have used the formula for the expectation of a quadratic form
(Proposition~\ref{prp:quadform}) in the penultimate step, and have performed
some algebraic manipulations to obtain the final expression.
In particular, for the term involving the trace we used
\[
\trace(\noiseinv \F \prcov^{1/2} \S^2 \prcov^{1/2}\F^\tran) = 
    \trace(\prcov^{1/2} \S^2 \prcov^{1/2}\F^\tran\noiseinv\F) = 
    \trace(\S^2 \HMt).
\]
Next, we consider the expectation of the second term in~\eqref{equ:innerexp} 
with respect to
the prior measure. This is again done using the formula for the
expectation of a quadratic form:
\begin{equation}\label{equ:outerexp}
    \begin{aligned}
\ave{\priorm}{\dpar^\tran \Q \dpar} 
&= \trace(\Q \prcov) + \dparpr^\tran \Q \dparpr 
\\
&= \trace(\HM \prcov^{1/2} \S^2 \prcov^{1/2} \HM \prcov) + \dparpr^\tran \Q \dparpr 
\\
&= \trace(\S^2 \HMt^2) + \dparpr^\tran \Q \dparpr.
\end{aligned}
\end{equation}
Therefore, using~\eqref{equ:innerexp} and~\eqref{equ:outerexp}, we have
\begin{equation}\label{equ:first_term}
\begin{aligned}
&\ave{\priorm}{\avey{ (\postcov\F^\tran\noiseinv \obs)^\tran \prcov^{-1} (\postcov\F^\tran\noiseinv\obs)}}
\\
&\quad= \trace(\S^2 \HMt) + \trace(\S^2 \HMt^2) + \dparpr^\tran \Q \dparpr 
\\
&\quad= \trace(\S \HMt) + \dparpr^\tran \Q\dparpr.
\end{aligned}
\end{equation}
Note that in the last step we have combined the trace terms using
\[
\trace(\S^2 \HMt + \S^2 \HMt^2)  
= \trace\big(\S^2 \HMt (\Id + \HMt)\big)   
= \trace(\S^2 \HMt \S^{-1})                
= \trace(\S \HMt).
\]
Finally, using~\eqref{equ:ugly} along with~\eqref{equ:third_term}, \eqref{equ:second_term}, 
and~\eqref{equ:first_term}, we have 
\begin{multline*}
\ave{\priorm}{\avey{(\dparpost - \dparpr)^\tran \prcov^{-1} (\dparpost - \dparpr)}}\\
= \trace(\S \HMt) + \dparpr^\tran \Q \dparpr
-2 \dparpr^\tran \Q \dparpr
+\dparpr^\tran \Q \dparpr
=\trace(\S \HMt). \qedhere
\end{multline*}

\end{proof}

We are now ready to state and prove the following 
result that establishes~\eqref{equ:DKLresult}.
\begin{theorem}
Let $\priorm = \GM{\dparpr}{\prcov}$ and $\postm = \GM{\dparpost}{\postcov}$
be the prior and posterior measures corresponding to a Bayesian
linear inverse problem with additive Gaussian noise model as in~\eqref{equ:linmodel}. Then,
\[
\aveDKL(\postm \| \priorm) = -\frac12 \log \det(\postcov) + \frac12 \log\det(\prcov).
\]
\end{theorem}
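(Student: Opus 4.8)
The plan is to begin from the closed-form expression~\eqref{equ:gauss-kl} for the KL divergence between two Gaussian measures, applied with $\mu_1 = \postm$ and $\mu_2 = \priorm$. Since $\postm = \GM{\dparpost}{\postcov}$ and $\priorm = \GM{\vec{0}}{\prcov}$, this yields
\[
\DKL(\postm\|\priorm) = \frac{1}{2}\Big[-\log\Big(\frac{\det\postcov}{\det\prcov}\Big) - n + \trace(\prcov^{-1}\postcov) + \dparpost^\tran\prcov^{-1}\dparpost\Big].
\]
The observation that drives everything is that, of these four contributions, only the quadratic form $\dparpost^\tran\prcov^{-1}\dparpost$ depends on the data, since $\dparpost = \dparpost(\obs)$ through~\eqref{equ:postmeas}; the two determinants, the constant $-n$, and the trace $\trace(\prcov^{-1}\postcov)$ are all deterministic. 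Therefore, applying the double average $\ave{\priorm}{\avey{\cdot}}$ leaves the first three terms untouched and acts only on the last.

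For the surviving random term I would invoke Lemma~\ref{lem:quadform}, which gives $\ave{\priorm}{\avey{\dparpost^\tran\prcov^{-1}\dparpost}} = \trace(\S\HMt)$ --- this is where the real work of the argument has already been done. In parallel, I would rewrite the deterministic trace using $\postcov = \prcov^{1/2}\S\prcov^{1/2}$: by the cyclic property of the trace, $\trace(\prcov^{-1}\postcov) = \trace(\prcov^{-1/2}\S\prcov^{1/2}) = \trace(\S)$. After these two reductions the averaged divergence reads
\[
\aveDKL(\postm\|\priorm) = \frac{1}{2}\Big[-\log\Big(\frac{\det\postcov}{\det\prcov}\Big) - n + \trace(\S) + \trace(\S\HMt)\Big].
\]

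The one step that carries any weight is checking that the three non-logarithmic terms cancel. From the definition~\eqref{equ:Sdef} we have $\S^{-1} = \HMt + \Id$, hence $\HMt = \S^{-1} - \Id$ and $\trace(\S\HMt) = \trace(\Id - \S) = n - \trace(\S)$. Consequently $-n + \trace(\S) + \trace(\S\HMt) = 0$, and the averaged KL divergence collapses to $-\frac{1}{2}\log(\det\postcov/\det\prcov) = -\frac12\log\det\postcov + \frac12\log\det\prcov$, which is the claimed identity. I do not anticipate a genuine obstacle here: once Lemma~\ref{lem:quadform} is in hand, the theorem is essentially bookkeeping plus the single trace identity $\trace(\S\HMt) = n - \trace(\S)$.
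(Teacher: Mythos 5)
Your proposal is correct and follows essentially the same route as the paper: expand $\DKL(\postm\|\priorm)$ via~\eqref{equ:gauss-kl}, note that only the quadratic term is random, invoke Lemma~\ref{lem:quadform} together with $\trace(\prcov^{-1}\postcov)=\trace(\S)$, and verify the cancellation of the non-logarithmic terms. Your closing identity $\trace(\S\HMt)=n-\trace(\S)$ is just a rearrangement of the paper's computation $\trace\big(\S(\Id+\HMt)\big)=\trace(\Id)=n$, so the two arguments are the same in substance.
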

\begin{proof}
Let us consider the expression for the KL divergence 
from the posterior to the prior:
\begin{equation*}
   \DKL(\postm\| \priorm) =  
   \frac{1}{2}
                      \Big[ -\log\left( \frac{\det\postcov}{\det \prcov}\right) - n +
                              \trace(\prcov^{-1} \postcov) +
                              (\dparpost - \dparpr)^\tran \prcov^{-1} (\dparpost - \dparpr)
                      \Big].
\end{equation*}
Note that, 
\[
\begin{aligned}
   2\aveDKL&(\postm \| \priorm) \\ 
   &= \ave{\priorm}{\avey{-\log\left( \frac{\det\postcov}{\det \prcov}\right) - n +
                              \trace(\prcov^{-1} \postcov) +
                              (\dparpost - \dparpr)^\tran \prcov^{-1} (\dparpost-\dparpr)}}\\
&= -\log\left( \frac{\det\postcov}{\det \prcov}\right) - n +
\trace(\prcov^{-1} \postcov) + \ave{\priorm}{\avey{(\dparpost - \dparpr)^\tran \prcov^{-1} (\dparpost-\dparpr)}}.    
\end{aligned}
\]
We have,
\begin{equation}\label{equ:minor}
\trace(\prcov^{-1} \postcov) = \trace(\prcov^{-1} \prcov^{1/2} \S \prcov^{1/2}) = \trace(\S).
\end{equation}
Thus, by Lemma~\ref{lem:quadform} 
\begin{align*} 
- n + &\trace(\prcov^{-1} \postcov) + \ave{\priorm}{\avey{(\dparpost - \dparpr)^\tran \prcov^{-1} (\dparpost-\dparpr)}} 
\\
&= -n + \trace(\S) + \trace(\S \HMt) \\ 
&= - n + \trace(\S (\Id + \HMt)) \\     
&= - n + \trace(\Id)                  
= 0.
\end{align*}
Hence, 
\[
\aveDKL(\postm \| \priorm) = -\frac12  \log\left( \frac{\det\postcov}{\det \prcov}\right)
    = -\frac12 \log\det \postcov + \frac12 \log\det \prcov. \qedhere
\]
\end{proof}

We also mention the following alternative form of the expression for $\aveDKL$:
\begin{equation}\label{equ:DKL_alt}
\aveDKL(\postm \| \priorm) 
= 
\frac12 \log \det( \HMt + \Id).
\end{equation}
This can be obtained via the following simple calculation:
\[
\begin{aligned}
\aveDKL(\postm \| \priorm) = -\frac12  \log\left( \frac{\det\postcov}{\det \prcov}\right) 
 &=  -\frac12 \log \Big( \det(\postcov) \det(\prcov^{-1})\Big) \\
 &=  -\frac12 \log \Big( \det(\prcov^{-1/2} \postcov \prcov^{-1/2})\Big) \\
 &= -\frac12 \log \det( (\HMt + \Id)^{-1})\\
 &= \frac12 \log \det( \HMt + \Id).
\end{aligned}
\]

The expression for $\aveDKL$ given by~\eqref{equ:DKL_alt} is important from
both theoretical and computational points of view.  In the first place, we
point out that~\eqref{equ:DKL_alt} can be extended to an infinite-dimensional
Hilbert space setting. This leads to the infinite-dimensional analogue of the
D-optimality criterion.  See~\cite{AlexanderianGloorGhattas16} for details
regarding formulating the Bayesian D-optimal design problem in
infinite-dimensional Hilbert spaces.  The developments in that article include
derivation of the infinite-dimensional version of the expression for the
KL-divergence from the posterior measure to the prior, which is then used to
derive the expression for the expected information gain in the
infinite-dimensional setting.  Such extensions are relevant for Bayesian linear
inverse problems governed by partial differential equations with
function-valued inversion parameters, which are common in applications.  We
also point out the article~\cite{AlexanderianSaibaba18}, which contains a
derivation of the analogue of~\eqref{equ:DKL_alt} for a discretized version of
an infinite-dimensional Bayesian linear inverse problem. Note that such 
discretizations involve working in a finite-dimensional Hilbert space equipped
with a properly weighted inner product~\cite{bui2013computational}.

The expression~\eqref{equ:DKL_alt} for the expected information gain is also
important from a computational point of view.  In ill-posed inverse problems, $\HMt$
can often be approximated accurately with a low-rank representation. Such approximations can be used
for efficiently computing $\aveDKL$ defined according to~\eqref{equ:DKL_alt}.
See~\cite{AlexanderianSaibaba18} for computational methods for optimizing the
expected information gain in large-scale Bayesian linear inverse problems using
low-rank spectral decompositions or randomized matrix methods.
 
\bibliographystyle{plain}
\bibliography{refs.bib}
\end{document}